\newtheorem{thrm}{Theorem}
\newtheorem{lemm}[thrm]{Lemma}
\newtheorem{coro}[thrm]{Corollary}
\theoremstyle{definition}
\newtheorem{deff}[thrm]{Definition}
\newtheorem{nota}[thrm]{Notation}
\newtheorem{rmrk}[thrm]{Remark}
\newtheorem{exmp}[thrm]{Example}
\begin{document}

\title{
Path integral control and state-dependent feedback
}

\author{
	Sep Thijssen
}
\email[]{s.thijssen@donders.ru.nl}
\author{
	H.J. Kappen
}
\email[]{b.kappen@science.ru.nl}
\homepage[]{http://www.snn.ru.nl/~bertk}
\affiliation{%
Department of Neurophysics, \\
Donders Institute for Neuroscience\\
Radboud University Nijmegen, The Netherlands
}%

\date{
	\today
}

\begin{abstract}
In this paper we address the problem of computing state-dependent feedback controls for path integral control
problems. To this end we generalize the path integral control formula and utilize this to construct parametrized
state-dependent feedback controllers. In addition, we show a relation between control and importance sampling:
Better control, in terms of control cost, yields more efficient importance sampling, in terms of effective sample
size. The optimal control provides a zero-variance estimate.
\end{abstract}

\pacs{02.50.Ey, 02.30.Yy, 05.10.Ln, 05.10.Gg}

\maketitle

\section{Introduction}

Control methods are used widely in many engineering
applications, such as mechanical systems, chemical plants,
finance, and robotics. Often, these methods are used to stabilize
the system around a particular set point or trajectory using state
feedback. In robotics, the problem may be to plan a sequence of
actions that yield a motor behavior such as walking or grasping
an object
\cite{rombokas, kinjo}. 
In finance, the problem may be to devise a
sequence of buy and sell actions to optimize a portfolio of
assets, or to determine the optimal option price \cite{glasserman}.

Optimal control theory provides an elegant mathematical
framework for computing an optimal controller using the
Hamilton-Jacobi-Bellman (HJB) equation. In general the HJB
equation is impossible to solve analytically, and numerical
solutions are intractable due to the problem of dimensionality.
As a result, often a suboptimal linear feedback controller such
as a proportional-integral-derivative (PID) controller \cite{stengel} or
another heuristic approach is used instead. The use of suboptimal controllers may be particularly problematic for nonlinear
stochastic problems, where noise affects the optimality of the
controller.

One way to proceed is to consider the class of control
problems in which the HJB equation can be linearized. Such
problems can be divided into two closely related cases \cite{theodorou2012relative}.
The first considers infinite-time-average cost problems, while
the second considers finite-time problems. Approaches of the
first kind \cite{horowitz, kinjo} solve the control problem as an eigenvalue
problem. This class has the advantage that the solution also
computes a feedback signal, but the disadvantage that a
discrete representation of the state space is required. In the
second case the optimal control solution is given as a path
integral \cite{kappen_prl05}. This case will be the subject of this work.
Path integral approaches have led to efficient computational
methods that have been successfully applied to multiagent
systems and robot movement \cite{rombokas, broek2006, anderson, sugimoto, theu_jmlr}.

Despite its success, two key aspects have not yet been addressed.
\begin{enumerate}
\item 
The issue of state feedback has been largely ignored
in path integral approaches and the resulting ``open-loop''
controllers are independent of the state; they are possibly augmented with an additional PID controller to ensure stability
\cite{rombokas}.
\item
The path integral is computed using Monte Carlo
sampling. The use of an exploring control as a type of
importance sampling has been suggested to improve the
efficiency of the sampling \cite{kappen2005path, glasserman} but there appear to be no
theoretical results to back this up.
\end{enumerate}

These two aspects are related because the exploring controls
are most effective if they are state feedback controls. In
this paper we propose solutions to these two issues. We
generalize the path integral control formula and utilize this to
construct parametrized state-dependent feedback controllers.
In Corollary~\ref{coro:feedback} we show how a feedback controller might be
obtained using path integral control computations that can
be approximated to arbitrary precision in this way if the
parametrization is correct. The parameters for all future times
can be computed using a single set of Monte Carlo samples.

We derive the key property that the path integral is
independent of the importance sampling when using infinite
samples. However, importance sampling strongly affects the
efficiency of the sampler. In Theorem~\ref{thrm:var} we derive a bound
which implies that, when the importance control approaches
the optimal control, the variance in the estimates reduces
to zero and the effective sample size becomes maximal.
This allows us to improve the estimates iteratively by using
better and better importance sampling with increasing effective
sample size.

This work is structured as follows. In Section~\ref{sect:def} we review path integral control and we extend the existing theory in Section~\ref{sect:lemma}. Using this we prove additional variance bounds in Section~\ref{sect:importance}, and generalized path integral control formulas in Section~\ref{sect:main}. In Section~\ref{sect:feedback} we construct a feedback controller, and describe how to compute it efficiently. In Section~\ref{sect:example} we show in an example how to compute several nonlinear feedback controllers for a nonlinear control problem.


\section{The Path Integral Control Problem} 
\label{sect:def}

Consider the dynamical system
\begin{align}\label{eq:sdex}
dX^u(t) &= b(t, X^u(t))dt \nonumber \\
		&\phantom{=\ }+ \sigma(t, X^u(t)) \left[(u(t,X^u(t))dt + dW(t)\right],
\end{align}
for $t_0\leq t\leq t_1$ and with $X^u(t_0) = x_0$. Here $W(t)$ is $m$-dimensional standard Brownian motion, and we take $b:[t_0, t_1]\times \mathbb{R}^n \to \mathbb{R}^n$, $\sigma:[t_0, t_1]\times \mathbb{R}^n \to \mathbb{R}^{n\times m}$ and $u:[t_0, t_1]\times \mathbb{R}^n \to \mathbb{R}^m$ such that a solution of Eq.~(\ref{eq:sdex}) exists. Formulating exact conditions that guarantee existence is not the aim of this work. (See \cite{oksendal, fleming2006controlled} for details of the theory, or \cite{bierkens} for a mathematical approach to path integral control.)

Given a function $u(t, x)$ that defines the control for each state $x$ and each time $t_0\leq t\leq t_1$, we define the cost 
\begin{align}
S^u(t) = 
	&	\int_{t}^{t_1} V(s, X^u(s))+\frac12u(s, X^u(s))'u(s, X^u(s)) ds \nonumber \\
	&+	\int_{t}^{t_1} u(s, X^u(s))'dW(s),\label{eq:s}
\end{align}
where the prime denotes the transpose.
Note that $S$ depends on future values of $X$ and is therefore not adaptive \cite{oksendal, fleming2006controlled} with respect to the Brownian motion. 

It is unusual to include a stochastic integral with respect to Brownian motion in the cost because it vanishes when taking the expectation value. However, when performing importance sampling with $u$, such a term appears naturally (see Section~\ref{sect:importance}). 

The goal in stochastic optimal control is to minimize the expected cost with respect to the control. 
\begin{align*}
&J(t, x) 		= \min_u\mathbb{E}\left[ S^u(t) \mid X^u(t) = x\right], \\
&u^*(\cdot, \cdot)	= \arg \min_u \mathbb{E}[ S^u(t_0) ].
\end{align*}
Here $\mathbb{E}$ denotes the expected value with respect to the stochastic process from Eq.~(\ref{eq:sdex}). The following, previously established result \cite{kappen_prl05, theu_jmlr} gives a solution of the control problem in terms of path integrals. 

\begin{thrm}\label{thrm:old} The solution of the control problem is given by 
\begin{align}
J(t_0, x_0)
	& = -\log \mathbb{E} \ e^{-S^u(t_0)}, 	\label{eq:pi0old}	\\
u^*(t_0, x_0) - u(t_0, x_0) \label{eq:pi1old}
	& = 
		\lim_{t \to t_0} \frac
			{\mathbb{E}\left[e^{-S^u(t_0)} \int_{t_0}^tdW(s) \right]}
			{(t - t_0)\mathbb{E}\left[e^{-S^u(t_0)}\right]}.
\end{align}
\end{thrm}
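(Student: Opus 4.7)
The plan is to combine the classical HJB-Cole-Hopf-Feynman-Kac pipeline (which handles the uncontrolled case) with a Girsanov change of measure that absorbs the importance-sampling control $u$. Because $u$ is state feedback, $\int u'\,dW$ is a $P$-martingale, so $\mathbb{E}[S^u(t_0)] = \mathbb{E}[\int(V + \tfrac12 u'u)\,ds]$ and dynamic programming gives the usual HJB equation $-\partial_t J = V + b'\nabla J - \tfrac12 (\nabla J)'\sigma\sigma'\nabla J + \tfrac12 \mathrm{tr}(\sigma\sigma'\nabla^2 J)$ with $J(t_1,\cdot)=0$ and pointwise optimizer $u^*(t,x) = -\sigma(t,x)'\nabla J(t,x)$. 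The substitution $\psi := e^{-J}$ cancels the quadratic nonlinearity and produces the linear backward PDE $\partial_t\psi + b'\nabla\psi + \tfrac12 \mathrm{tr}(\sigma\sigma'\nabla^2\psi) = V\psi$, whose Feynman-Kac representation is $\psi(t_0, x_0) = \mathbb{E}[\exp(-\int_{t_0}^{t_1} V(s, X^0(s))\,ds)]$, with $X^0$ the uncontrolled diffusion started at $x_0$. This already proves (\ref{eq:pi0old}) in the special case $u \equiv 0$.

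To extend to arbitrary $u$, I would invoke Girsanov's theorem. Defining $Q$ by $dQ/dP = \exp(-\int u'\,dW - \tfrac12\int u'u\,ds)$, under $Q$ the process $\tilde W(t) := W(t) + \int_{t_0}^t u(s, X^u(s))\,ds$ is a standard Brownian motion and $dX^u = b\,dt + \sigma\,d\tilde W$, so $X^u$ has the law of the uncontrolled diffusion. The very point of including $\int u'\,dW$ and $\tfrac12\int u'u\,ds$ inside $S^u$ is that these two terms cancel exactly against $dP/dQ$, giving $\mathbb{E}_P[e^{-S^u(t_0)}] = \mathbb{E}_Q[e^{-\int V}] = \psi(t_0, x_0)$, which is (\ref{eq:pi0old}) in full generality.

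For (\ref{eq:pi1old}) I would apply the same change of measure to the numerator, using $W(t) - W(t_0) = \tilde W(t) - \tilde W(t_0) - \int_{t_0}^t u(s, X^u(s))\,ds$ to split it into two $Q$-expectations. The second piece, divided by $(t-t_0)$, converges to $u(t_0, x_0)\,\psi(t_0, x_0)$ by continuity of $u$. The first piece, $\mathbb{E}_Q[e^{-\int V}(\tilde W(t) - \tilde W(t_0))]/(t-t_0)$, is handled by Ito's lemma applied to the martingale $M(t) := e^{-\int_{t_0}^t V(s, X^0(s))\,ds}\,\psi(t, X^0(t))$, whose stochastic integrand is $e^{-\int V}(\nabla\psi)'\sigma$; the Ito isometry identifies the limit as $\sigma(t_0,x_0)'\nabla\psi(t_0,x_0) = u^*(t_0, x_0)\,\psi(t_0, x_0)$. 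Dividing through by $(t-t_0)\mathbb{E}_P[e^{-S^u}] = (t-t_0)\psi(t_0, x_0)$ yields the stated $u^*(t_0, x_0) - u(t_0, x_0)$.

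The main obstacle I expect is technical rather than conceptual: one must verify a Novikov-type condition on $u$ so that Girsanov's theorem applies and the exponential martingale $dQ/dP$ is well defined, together with enough regularity of $V, b, \sigma$ to justify the Cole-Hopf substitution and the interchange of the $t \downarrow t_0$ limit with expectation when extracting $\sigma'\nabla\psi$. These are precisely the admissibility assumptions the authors defer to references like \cite{oksendal, bierkens}.
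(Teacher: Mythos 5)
Your proof is correct, but it follows a genuinely different route from the paper's. You reduce everything to the uncontrolled diffusion: Cole--Hopf gives the linear PDE for $\psi$, classical Feynman--Kac gives $\psi(t_0,x_0)=\mathbb{E}[e^{-\int V}]$ under the uncontrolled law, and an explicit Girsanov change of measure (whose Radon--Nikodym density cancels the $\tfrac12\int u'u\,ds+\int u'\,dW$ terms in $S^u$) transports both \eqref{eq:pi0old} and \eqref{eq:pi1old} back to arbitrary $u$, with the numerator of \eqref{eq:pi1old} split into a martingale piece (handled by It\^o's formula for $e^{-\int V}\psi$ and the isometry, yielding $\sigma'\psi_x=\psi u^*$) and a drift piece (yielding $-u\psi$). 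The paper never changes measure: it performs a single It\^o computation under the original measure on the product $\phi(t)\psi(t)$, where $\phi(t)=e^{-S^u(t_0)+S^u(t)}$ plays the role of your density times $e^{-\int V}$, and obtains the pathwise martingale identity of Lemma~\ref{lemm:main}, $e^{-S^u(t)}-\psi(t)=\phi(t)^{-1}\int_t^{t_1}\phi\psi\,(u^*-u)'dW$. Equation \eqref{eq:pi0old} is then just the expectation of this identity (the Feynman--Kac formula \eqref{eq:fk}), and \eqref{eq:pi1old} follows by multiplying it by $\int_{t_0}^t dW$ and taking expectations. The trade-off: your Girsanov argument is the standard importance-sampling picture and makes transparent \emph{why} the $\int u'dW$ term belongs in $S^u$ (the paper only remarks on this in Section~\ref{sect:importance}), but it delivers only the expectations needed for Theorem~\ref{thrm:old}; the paper's route yields the stronger almost-sure representation \eqref{eq:main}, which is then reused to prove the variance bounds of Theorem~\ref{thrm:var} and the generalized formulas of Theorem~\ref{thrm:main}. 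Your closing caveats (Novikov condition, regularity for Cole--Hopf, interchange of limit and expectation) are exactly the hypotheses the paper also leaves implicit.
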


\begin{proof}
Eq.~(\ref{eq:pi0old}) will be proven in Remark~\ref{rmrk:pi0} and Eq.~(\ref{eq:pi1old}) follows from the generalized Main Theorem in Section~\ref{sect:main}.
\end{proof}

Because the solution of the control problem is given in terms of a path integral Eqs.~(\ref{eq:pi0old}, \ref{eq:pi1old}), the control problem Eqs.~(\ref{eq:sdex}, \ref{eq:s}) is referred to as a path integral control problem. The formulas from Theorem~\ref{thrm:old} provide a solution at $t_0$. Of course, since $t_0$ is arbitrary, this can be utilized at any time $t$. However, for $t > t_0$, the state $X^u(t)$ is probabilistic, and consequently, the optimal control must be recomputed for each $t, x$ separately. This issue will be partly resolved in the Main Theorem, where we show that all expected optimal future controls can be expressed using a single path integral.

The optimal control solution holds for any function $u$. In particular, it holds for $u = 0$ in which case we refer to Eq.~(\ref{eq:sdex}) as the uncontrolled dynamics. Computing the optimal control in Eq.~(\ref{eq:pi1old}) with $u \neq 0$ implements a type of importance sampling, which is further discussed in Section~\ref{sect:importance}.

\begin{rmrk}
It is straightforward, but notationally tedious, to generalize the control problem to the following slightly more general form
\begin{align*}
& dX = bdt + \sigma (udt + \rho dW), \\
& S = \Phi(X^u(T)) + \int_{t_0}^{t_1} V + \frac12 u'Ru dt + \int_{t_0}^{t_1} u'R\rho dW,
\end{align*}
with $\Phi\in\mathbb{R}$, and $R,\sigma\in\mathbb{R}^{m\times m}$ with $\lambda I = R \rho \rho'$ and $\lambda\in\mathbb{R}_{>0}$. Note that we dropped dependence on $t, X^u(t)$ for brevity. 
\end{rmrk}


\section{Linearizable HJB Equation and stochastic processes}
\label{sect:lemma}

In this work we use the HJB equation as a means of solving the control problem. The path integral control problem is characterized by the fact that the HJB equation can be linearized. This will be utilized in this section to obtain the Main Lemma.

\begin{deff}
Throughout the rest of this work we define
\begin{align*}
\psi(t, x) 
	& = e^{- J(t, x)}, \\
\psi(t) 
	& = \psi(t, X^u(t)), \\
\phi(t) 
	& = e^{-S^u(t_0) + S^u(t)} .
\end{align*}
Note that $\psi(\cdot, \cdot)$ denotes a \textit{function} of time and state, while $\phi(\cdot)$ and $\psi(\cdot)$ denote \textit{stochastic processes}, the latter being equal to the function $\psi(\cdot, \cdot)$ of the stochastic process Eq.~(\ref{eq:sdex}). This convention will also be used for other functions, e.g.~$u(t) = u(t, X^u(t))$. We remark that, in contrast to $S^u(t)$, the processes $\psi(t)$ and $\phi(t)$ are adapted: They do not depend on future values of $X$. 
\end{deff}

\begin{lemm} [Main Lemma] \label{lemm:main}
\begin{align}
e^{-S^u(t)} - \psi(t) 
	=	\frac1{\phi(t)}\int_{t}^{t_1}\!\! \phi(s)\psi(s) \left[
			u^*(s) - u(s)
		\right]'\! dW(s).	\label{eq:main} 
\end{align}
\end{lemm}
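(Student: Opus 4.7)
The strategy I would adopt is to establish the identity by proving that the product process $\phi(t)\psi(t)$ has no drift — that is, $d(\phi\psi) = \phi\psi(u^*-u)'dW$ — and then integrate from $t$ to $t_1$. The form of the right-hand side of the lemma already hints at this: if $\phi\psi$ is a stochastic integral with integrand $\phi\psi(u^*-u)'$, then the identity will follow immediately from the boundary values $\phi(t_1) = e^{-S^u(t_0)}$ (since $S^u(t_1) = 0$) and $\psi(t_1) = 1$ (since there is no terminal cost), followed by dividing by $\phi(t)$ and using $e^{-S^u(t_0)}/\phi(t) = e^{-S^u(t)}$.

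Concretely, I would compute $d\phi$ and $d\psi$ separately and apply the It\^o product rule. From the definition $\phi(t) = e^{-S^u(t_0)+S^u(t)}$, It\^o's formula gives $d\phi = -\phi V\,dt - \phi u'dW$ once the quadratic correction $\tfrac12\phi|u|^2\,dt$ cancels the $\tfrac12 u'u$ term inside $S^u$. For $\psi(t) = \psi(t, X^u(t))$ I would invoke the linearization characterizing this class of problems: substituting $\psi = e^{-J}$ into the HJB equation reduces the nonlinear PDE to the linear backward PDE $\partial_t\psi + b'\nabla\psi + \tfrac12\mathrm{tr}(\sigma\sigma'\nabla^2\psi) = V\psi$, with the optimizer taking the explicit form $u^* = \sigma'\nabla\psi/\psi$. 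Applying It\^o to $\psi(t, X^u(t))$ along the $u$-controlled SDE and using this PDE yields $d\psi = [V\psi + \psi u'u^*]\,dt + \psi(u^*)'dW$. In the product rule $d(\phi\psi) = \phi\,d\psi + \psi\,d\phi + d\phi\,d\psi$, the $V\psi$ drifts cancel, and the cross-variation $d\phi\,d\psi = -\phi\psi u'u^*\,dt$ cancels the $\phi\psi u'u^*$ drift of $\phi\,d\psi$, leaving exactly $d(\phi\psi) = \phi\psi(u^*-u)'dW$. Integrating from $t$ to $t_1$ and invoking the boundary values gives $e^{-S^u(t_0)} - \phi(t)\psi(t) = \int_t^{t_1}\phi\psi(u^*-u)'dW$, and dividing by $\phi(t)$ completes the derivation.

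The step I expect to require the most care is the linearization itself: one must ensure $\psi > 0$ and $C^{1,2}$, verify that the HJB minimization admits the explicit form $u^* = \sigma'\nabla\psi/\psi$, and justify the use of It\^o at that regularity. The subsequent calculation is essentially algebraic, and the reason the cancellations go through for an \emph{arbitrary} admissible $u$ (not merely $u^*$) is precisely the presence of the stochastic integral $\int u'dW$ in the definition of $S^u$ — this is what makes the identity pathwise in $u$ rather than something that holds only in expectation, and explains the otherwise unusual design choice noted in the paragraph preceding the main theorem. Integrability of the resulting stochastic integral is a separate question from the pathwise identity asserted here, so I would defer it to the later variance bounds.
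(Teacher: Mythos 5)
Your proposal is correct and follows essentially the same route as the paper: linearize the HJB equation via $\psi=e^{-J}$, derive the SDEs $d\phi=-\phi(Vdt+u'dW)$ and $d\psi=V\psi\,dt+\psi_x'\sigma(udt+dW)$, apply the It\^o product rule so that the drifts cancel and $d(\phi\psi)=\phi\psi(u^*-u)'dW$, then integrate from $t$ to $t_1$ and divide by $\phi(t)$. The only differences are cosmetic (you write the martingale part of $d\psi$ as $\psi(u^*)'dW$ rather than $\psi_x'\sigma dW$, and you spell out the verification of the $\phi$-SDE that the paper leaves to the reader).
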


\begin{proof}
The HJB Equation \cite{fleming2006controlled} for the control problem is 
\begin{align*} 
-J_t
	& =	\min_u \left( 
		V + 
		\frac12 u'u + 
		(b + \sigma u)'J_x + 
		\frac12 \mathrm{Tr}\left(\sigma\sigma' J_{xx}\right)
	\right),
\end{align*}
with boundary condition $J(t_1, x) = 0$. We can solve for $u$ which gives:
\begin{align}
u^*
	& = -\sigma'J_x,	\nonumber \\
-J_t 
	& = V - 
		\frac12 J_x' \sigma \sigma' J_x +
		b' J_x +
		\frac12 \mathrm{Tr}\left(\sigma\sigma' J_{xx}\right). \label{eq:be}
\end{align}
This partial differential equation becomes linear in terms of $\psi$. We have
\begin{align} 
&\psi_t + b'\psi_x + \frac{1}{2}\mathrm{Tr} \ \sigma \sigma' \psi_{xx}=  V\psi,\label{eq:linhjb} \\
&u^*=\frac{1}{\psi} \sigma'\psi_x,	\nonumber 
\end{align}
with boundary condition $\psi(t_1,x) = e^{-J(t_1, x)} = 1$.

Using It\^o's Lemma \cite{oksendal, fleming2006controlled} we obtain a stochastic differential equation (SDE) for the process $\psi(t)$ (dropping the dependence on time for brevity)
\begin{align*}
d\psi
	& = \left( \psi_t + \psi'_x(b + \sigma u) + \frac12 \mathrm{Tr} \ \sigma\sigma' \psi_{xx}\right)dt + \psi'_x\sigma dW \\
	& = V\psi dt + \psi'_x\sigma(udt+ dW),
\end{align*}
where the last equation follows because $\psi(\cdot, \cdot)$ satisfies Eq.~(\ref{eq:linhjb}). From the definition of $\phi$ one readily verifies that it satisfies the SDE $d\phi(t) = -\phi(t)\left(V(t)dt + u(t)'dW(t)\right)$ with initial condition $\phi(t_0) = 1$. Using the product rule from stochastic calculus \cite{oksendal} we obtain 
\begin{align}
d(\phi\psi)
	&=	\psi d\phi + 
		\phi d\psi +
		d[\phi, \psi] \nonumber \\
	&=	-\phi\psi u'dW +
	 	\phi\psi'_x \sigma dW \nonumber \\
	&=	\phi\psi[u^*-u]'dW. \label{eq:mainsde}
\end{align}
Integrating the above from $t$ to $t_1$ gives
\begin{align*}
\phi(t_1)\psi(t_1) &- \phi(t)\psi(t)  \\
	& =	\int_t^{t_1}\phi(s)\psi(s)(u^*(s) - u(s))'dW(s).
\end{align*}
Note that $\psi(t_1) = 1$ and that $\phi(t_1) = \phi(t)e^{-S^u(t)}$. Dividing by $\phi(t)$ we obtain the statement of the lemma. 
\end{proof}


\section{Optimal Importance Sampling} 
\label{sect:importance}

A Monte Carlo approximation of the optimal control solution Eq.~(\ref{eq:pi1old}) is a weighted average, where the weight depends on the path cost. If the variance of the weights is high, then a lot of samples are required to obtain a good estimate. Critically, Eq.~(\ref{eq:pi1old}) holds for all $u$, so that it can be chosen to reduce the variance of the path weights. This induces a change of measure and an importance sampling scheme. By the Girsanov Theorem \cite{oksendal, fleming2006controlled}, the change in measure does not affect the weighted average (for a more detailed description in the context of path integral control, see \cite{theodorou2012relative}). The Radon-Nikodym derivative $e^{-\int (\frac12 u'u dt  + u'dW)}$ is the correction term for importance sampling with $u$, which explains why we included $\int u'dW$ in the definition of $S$. 

In this section we will show that the optimal $u$ for sampling purposes turns out to be $u^*$. More generally, the variance will decrease as $u$ gets closer to $u^*$. This motivates policy iteration, in which increasingly better estimates $u$ of $u^*$ improve sampling so that even better approximations of $u^*$ might be obtained. 

\begin{deff}\label{deff:is} Given the process $X^u(t)$ for $t_0 < t < t_1$:
\begin{enumerate}
\item 
	The weight of a path is defined as $\alpha^u = \frac{e^{-S^u(t_0)}}{\mathbb{E}[e^{-S^u(t_0)}]}$.
\item \label{deff:fes}
	The fraction $\lambda^u$ of effective samples is $\lambda^u = \frac{1}{\mathbb{E}[(\alpha^u)^2]}$.
\end{enumerate}
\end{deff}

\begin{thrm} We have the following upper and lower bounds for the variance of the weight:
\label{thrm:var}
\begin{align}
&\operatorname{Var}(\alpha^u) \leq \int_{t_0}^{t_1}\mathbb{E}\left[(u^* - u)'(u^* - u)(\alpha^u)^2\right]  dt, \label{thrm:varupper} \\
&\operatorname{Var}(\alpha^u) \geq \int_{t_0}^{t_1}\mathbb{E}\left[(u^* - u)\alpha^u\right]'\mathbb{E}\left[(u^* - u)\alpha^u\right] dt. \label{thrm:varlower} 
\end{align}
\end{thrm}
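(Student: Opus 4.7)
The plan is to read the Main Lemma at $t = t_0$ and unpack what the left-hand side yields. Since $\phi(t_0) = 1$ and $\psi(t_0) = \mathbb{E}[e^{-S^u(t_0)}]$ by Eq.~(\ref{eq:pi0old}), dividing the identity through by $\psi(t_0)$ represents $\alpha^u - 1$ as a stochastic integral against $[u^* - u]'\,dW$. Because $\mathbb{E}[\alpha^u] = 1$, we have $\operatorname{Var}(\alpha^u) = \mathbb{E}[(\alpha^u - 1)^2]$, and the It\^o isometry then delivers an explicit variance formula as a time integral of $\mathbb{E}\!\left[\left(\phi(s)\psi(s)/\psi(t_0)\right)^2 (u^* - u)'(u^* - u)\right]$.

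The key observation is then that $M(s) := \phi(s)\psi(s)/\psi(t_0)$ is a martingale (already built into Eq.~(\ref{eq:mainsde})) that starts at $1$ and has terminal value $\alpha^u$, so $M(s) = \mathbb{E}[\alpha^u \mid \mathcal{F}_s]$. Rewriting the variance identity in this form gives
\[
\operatorname{Var}(\alpha^u) = \int_{t_0}^{t_1} \mathbb{E}\!\left[\mathbb{E}[\alpha^u\mid\mathcal{F}_s]^2\,(u^*(s) - u(s))'(u^*(s) - u(s))\right] ds.
\]

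The two bounds then drop out by applying Jensen's inequality in opposite directions. For Eq.~(\ref{thrm:varupper}), conditional Jensen gives $\mathbb{E}[\alpha^u\mid\mathcal{F}_s]^2 \leq \mathbb{E}[(\alpha^u)^2\mid\mathcal{F}_s]$, after which the tower property and the $\mathcal{F}_s$-measurability of $u^*(s) - u(s)$ collapse the conditioning. For Eq.~(\ref{thrm:varlower}), the same $\mathcal{F}_s$-measurability lets us pull $u^* - u$ through a conditional expectation, yielding $\mathbb{E}[(u^*-u)\alpha^u] = \mathbb{E}[(u^*-u)\mathbb{E}[\alpha^u\mid\mathcal{F}_s]]$; Cauchy--Schwarz (equivalently, Jensen on the convex map $y \mapsto y'y$) then bounds $\mathbb{E}[(u^*-u)\alpha^u]'\mathbb{E}[(u^*-u)\alpha^u]$ above by the integrand for every $s$, and integrating gives the lower bound.

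The obstacle I anticipate is technical rather than conceptual: one must guarantee that the stochastic integral in the Main Lemma is a genuine (not merely local) $L^2$-martingale so that the It\^o isometry is legitimate and both endpoints of $M$ lie in $L^2$. This reduces to an integrability condition on $\phi\psi(u^* - u)$ that we inherit from the standing regularity assumptions under which the path integral formulation is well posed, so I would simply invoke it in keeping with the paper's stated convention of not verifying such conditions from first principles.
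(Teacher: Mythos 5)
Your proof is correct and follows essentially the same route as the paper: apply the Main Lemma at $t=t_0$, use the It\^o isometry to get the exact variance identity, and then apply Jensen's inequality in the two directions (conditionally on $\mathcal{F}_s$ for the upper bound, unconditionally on the vector integrand for the lower bound). Your repackaging of the integrand weight as the martingale $M(s)=\mathbb{E}[\alpha^u\mid\mathcal{F}_s]=\phi(s)\psi(s)/\psi(t_0)$ is exactly the quantity the paper writes as $\alpha^u\psi(s)e^{S^u(s)}$, so the arguments coincide.
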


Because $\operatorname{Var}(\alpha^u) + 1 = \mathbb{E}[(\alpha^u)^2]$, the fraction of effective samples as defined in Definition~\ref{deff:is}.\ref{deff:fes} satisfies $0< \lambda^u \leq 1$. It has been suggested \cite{liubook} that this fraction can be used to determine how well one can compute a sample estimate of a weighted average. This can be connected with Theorem~\ref{thrm:var} as follows. 

\begin{coro}\label{coro:fes} If $||u^* - u||^2 \leq \epsilon/(t_1 - t_0)$, then
\begin{align*}
\lambda^u \geq 1 - \epsilon. 
\end{align*}
\end{coro}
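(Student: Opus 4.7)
The plan is to apply the upper bound from Theorem~\ref{thrm:var} directly and then convert the resulting inequality on $\operatorname{Var}(\alpha^u)$ into the desired bound on $\lambda^u$ using the identity $\operatorname{Var}(\alpha^u) + 1 = \mathbb{E}[(\alpha^u)^2] = 1/\lambda^u$.

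First I would interpret the hypothesis $\|u^* - u\|^2 \le \epsilon/(t_1 - t_0)$ as a (path-wise, time-wise) pointwise bound on $(u^*(s,X^u(s)) - u(s,X^u(s)))'(u^*(s,X^u(s)) - u(s,X^u(s)))$. Substituting this into the upper bound \eqref{thrm:varupper} lets the squared-norm factor come out of the expectation, giving
\begin{align*}
\operatorname{Var}(\alpha^u)
    &\le \int_{t_0}^{t_1} \frac{\epsilon}{t_1 - t_0}\, \mathbb{E}\!\left[(\alpha^u)^2\right] dt \\
    &= \epsilon\, \mathbb{E}\!\left[(\alpha^u)^2\right].
\end{align*}

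Next I would use $\mathbb{E}[(\alpha^u)^2] = 1 + \operatorname{Var}(\alpha^u)$ to rearrange this into $\mathbb{E}[(\alpha^u)^2](1 - \epsilon) \le 1$, i.e.\ $\mathbb{E}[(\alpha^u)^2] \le 1/(1-\epsilon)$, which by Definition~\ref{deff:is}.\ref{deff:fes} yields $\lambda^u \ge 1 - \epsilon$. Strictly, this final step needs $\epsilon < 1$; otherwise the statement $\lambda^u \ge 1 - \epsilon$ is trivial since $\lambda^u > 0$ always.

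The only subtle point, and the one I would spend care on, is the interpretation of the hypothesis: the natural reading is that $\|u^* - u\|^2$ is an essentially bounded function of $(s,x)$ on the support of the process, so that the pointwise estimate can indeed be pulled inside the expectation uniformly in $s$. Beyond that, the proof is a one-line manipulation of the already-established upper variance bound.
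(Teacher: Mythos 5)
Your proof is correct and is exactly the argument the paper intends: the paper's own proof is just the remark that the corollary ``follows readily from Eq.~(\ref{thrm:varupper})'', and your substitution of the pointwise bound, the identity $\operatorname{Var}(\alpha^u)+1=\mathbb{E}[(\alpha^u)^2]=1/\lambda^u$, and the rearrangement to $\mathbb{E}[(\alpha^u)^2]\le 1/(1-\epsilon)$ fill in precisely those steps (the paper itself uses the equivalent bound $\operatorname{Var}(\alpha^\epsilon)\le\epsilon/(1-\epsilon)$ in its example section). Your caveats about the pointwise/uniform reading of $\|u^*-u\|^2$ and about $\epsilon<1$ are both apt.
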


\begin{proof}
This follows readily from Eq.~(\ref{thrm:varupper}). 
\end{proof}

A numerical illustration of Theorem~\ref{thrm:var} can be found in Figure~\ref{fig:bound}. Before we prove Theorem~\ref{thrm:var}, we deduce a few useful facts that follow from the Main Lemma.

\begin{coro}
An optimally controlled random path is an instance of Eq.~(\ref{eq:sdex}) with $u = u^*$. Although such a path is random, its attributed cost has zero variance and is equal to the expected optimal cost to go:
\begin{align*}
S^{u^*}(t_0) = - \log \psi(t_0, x_0) = J(t_0, x_0).
\end{align*}
Furthermore we have $\alpha^{u^*} = 1$, such that the weighted average, which is independent of $u$, equals the expectation under the optimal process. 
\end{coro}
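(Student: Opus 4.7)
The plan is to apply the Main Lemma (Lemma~\ref{lemm:main}) with the choice $u = u^*$. On the right-hand side, the integrand $\phi(s)\psi(s)(u^*(s) - u(s))'dW(s)$ vanishes identically, so the lemma collapses to
\begin{align*}
e^{-S^{u^*}(t)} = \psi(t) = \psi(t, X^{u^*}(t)).
\end{align*}
Specializing to $t = t_0$ and using that $X^{u^*}(t_0) = x_0$ is deterministic, we get $e^{-S^{u^*}(t_0)} = \psi(t_0, x_0) = e^{-J(t_0, x_0)}$. Taking logarithms gives $S^{u^*}(t_0) = J(t_0, x_0)$, which is a deterministic number, so its variance is zero.

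Next I would deduce $\alpha^{u^*} = 1$ directly from its definition: since $e^{-S^{u^*}(t_0)}$ is almost surely equal to the constant $\psi(t_0, x_0)$, its expectation is the same constant, and the ratio in Definition~\ref{deff:is} is identically $1$.

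Finally, the statement that the weighted average is independent of $u$ is a restatement of Eq.~(\ref{eq:pi0old}) from Theorem~\ref{thrm:old} applied to quantities of the form $\mathbb{E}[f\,\alpha^u]$ (the Girsanov change of measure discussed at the start of Section~\ref{sect:importance} shows that such expectations do not depend on the choice of $u$). For $u = u^*$ the weights are trivial, so the weighted average reduces to the unweighted expectation under the dynamics \eqref{eq:sdex} driven by $u^*$, which is what the corollary asserts.

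There is no real obstacle here; the only subtle bookkeeping is to remember that $\psi(t)$ denotes the stochastic process $\psi(t, X^{u^*}(t))$ and that the initial condition $X^{u^*}(t_0) = x_0$ is what turns the process evaluated at $t_0$ into the deterministic number $\psi(t_0, x_0)$.
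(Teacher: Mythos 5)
Your proposal is correct and follows exactly the paper's route: the paper's entire proof is ``Take $u = u^*$ and $t = t_0$ in Eq.~(\ref{eq:main}),'' and you have simply spelled out the details (the stochastic integral vanishes, $\psi(t_0) = \psi(t_0, x_0)$ is deterministic, hence $\alpha^{u^*}=1$). No gaps.
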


\begin{proof}
Take $u = u^*$ and $t = t_0$ in Eq.~(\ref{eq:main}).
\end{proof}

\begin{coro}\label{coro:fk}
The following Feynman-Kac formula \cite{oksendal, fleming2006controlled} expresses $\psi$ as a path integral:
\begin{align}
\psi(t) = \mathbb{E} \left[e^{-S^u(t)} \mid \mathcal{F}_t \right] \label{eq:fk}.
\end{align}
Here the filtration $\mathcal{F}_t$ denotes that we are taking the expected value conditioned on events up to time $t$. 
\end{coro}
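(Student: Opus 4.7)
The plan is to derive the Feynman-Kac formula directly from the Main Lemma by taking the conditional expectation of both sides given $\mathcal{F}_t$. The point of the Main Lemma is precisely to rewrite the non-adapted quantity $e^{-S^u(t)}$ as the adapted quantity $\psi(t)$ plus a stochastic-integral remainder, and Corollary~\ref{coro:fk} is the immediate consequence that this remainder averages out.

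Concretely, I would rearrange Eq.~(\ref{eq:main}) as
\begin{align*}
e^{-S^u(t)} = \psi(t) + \frac{1}{\phi(t)}\int_{t}^{t_1}\phi(s)\psi(s)\bigl[u^*(s)-u(s)\bigr]'dW(s),
\end{align*}
and then apply $\mathbb{E}[\,\cdot\mid\mathcal{F}_t]$ to both sides. Per the definition just before the Main Lemma, $\psi(t)=\psi(t,X^u(t))$ and $\phi(t)$ are adapted, so $\psi(t)$ pulls straight through the conditional expectation and the factor $1/\phi(t)$ can be pulled outside. What remains is the conditional expectation of an It\^o integral over $[t,t_1]$ with an adapted integrand, which vanishes because such an integral is a martingale (with value $0$ at $s=t$) in its upper limit. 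Equating both sides yields exactly $\mathbb{E}[e^{-S^u(t)}\mid\mathcal{F}_t]=\psi(t)$.

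The only genuinely non-cosmetic issue is ensuring that the stochastic integral is a true martingale and not merely a local martingale, so that the conditional expectation really is zero. This requires a square-integrability condition on the integrand $\phi(s)\psi(s)(u^*(s)-u(s))$ on $[t,t_1]$. I would simply note that this is a standing regularity assumption of the path-integral framework (consistent with the existence conditions for Eq.~(\ref{eq:sdex}) and the finiteness of $J$ already invoked when writing the HJB equation), and cite \cite{oksendal, fleming2006controlled} as in the rest of the paper; no additional computation is needed. With that caveat, the corollary follows in a single line from Lemma~\ref{lemm:main}.
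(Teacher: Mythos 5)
Your proposal is correct and is essentially the paper's own proof, which simply says to take the (conditional) expectation of both sides of Eq.~(\ref{eq:main}); you have just spelled out the adaptedness of $\psi(t)$ and $\phi(t)$ and the vanishing of the It\^o integral explicitly. The remark about needing the integrand to be square-integrable so that the stochastic integral is a true martingale is a reasonable caveat that the paper leaves implicit.
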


\begin{proof}
Take the expected value on both sides of Eq.~(\ref{eq:main}).
\end{proof}

\begin{rmrk}\label{rmrk:pi0}
When we consider Eq.~(\ref{eq:fk}) with $t = t_0$, and take minus the logarithm on both sides, we obtain Eq.~(\ref{eq:pi0old}): a path integral formula for the optimal cost to go function. 
\end{rmrk}

\begin{proof}[Proof of Theorem~\ref{thrm:var}]
Consider Eq.~(\ref{eq:main}) with $t = t_0$, and divide by $\psi(t_0, x_0)$ such that 
\begin{align} 
&\operatorname{Var}\left(\alpha^u\right)  \nonumber \\
	&=	\mathbb{E}\left[ \left(\int_{t_0}^{t_1}  
			\frac{\phi(t)\psi(t)}{\psi(t_0)}[u^*(t) - u(t)]'dW(t)
		\right)^2\right]  
\nonumber \\
	&=	\mathbb{E} \int_{t_0}^{t_1} 
			\frac{\phi(t)^2\psi(t)^2}{\psi(t_0)^2}[u^*(t) - u(t)]'[u^*(t) - u(t)]dt 
			\nonumber \\
	& = \mathbb{E} \int_{t_0}^{t_1} 
			\!\!\left[\alpha^u\psi(t)e^{S^u(t)}\right]^2[u^*\!(t) - u(t)]'[u^*\!(t) - u(t)]dt.
		\label{eq:isom}
\end{align} 	
In the first line we used that $\phi(t_0) = 1$, and in the second line we applied the It\^o Isometry \cite{oksendal}. In the third line we used $\alpha^u = e^{-S^u(t_0)}/\psi(t_0)$, which follows from Eq.~(\ref{eq:fk}) with $t = t_0$. 

For the upper bound we consider Eq.~(\ref{eq:fk}) and apply Jensen's inequality 
\begin{align*}
\psi(t)^2
	= \mathbb{E} \left[
		e^{-S^u(t)}
	\mid \mathcal{F}_t \right]^2
	\leq \mathbb{E} \left[
		e^{-2S^u(t)}
	\mid \mathcal{F}_t \right].
\end{align*}
Substituting in Eq.~(\ref{eq:isom}) and using the Law of total expectation we obtain Ineq.~(\ref{thrm:varupper}).

For the lower bound we use Jensen's Inequality on the whole integrand of Eq.~(\ref{eq:isom}) to obtain
\begin{align*} 		
\operatorname{Var}\left(\alpha^u\right)
	\geq \int_{t_0}^{t_1} 
			\mathbb{E}&\left\{\alpha^u\psi(t)e^{S^u(t)}[u^*(t) - u(t)]'\right\} \\
			&\mathbb{E}\left\{\alpha^u\psi(t)e^{S^u(t)}[u^*(t) - u(t)]\right\}
			dt.
\end{align*} 
Using Eq.~(\ref{eq:fk}) and the Law of total expectation we obtain Ineq.~(\ref{thrm:varlower}).
\end{proof}

We conclude that the optimal control problem is equivalent to the optimal sampling problem. An important consequence, which is given in Corollary~\ref{coro:fes}, is that if the importance control is close to optimal, then so is the sampling efficiency.


\section{The Main Path Integral Theorem}
\label{sect:main}

The Main Theorem is a generalization of Theorem~\ref{thrm:old} that gives a solution of the control problem in terms of path integrals. The disadvantage of Theorem~\ref{thrm:old} is that it requires us to recompute the optimal control for each $t,x$ separately. Here, we show that we can also compute the \textit{expected} optimal future controls using a single set of trajectories with initialization $X(t_0) = x_0$. We furthermore generalize the path integral expressions by considering the product with some function $f(t, x)$. In the next section we utilize this result to construct a feedback controller. Here we proceed with the statement and the proof of the generalized path integral formula.

\begin{nota}
For any process $Y(t)$ we let $\left<Y(t)\right> = \left<Y\right>(t) = \mathbb{E}[\alpha^u Y(t)]$ denote the weighted average.
\end{nota}

\begin{thrm}[Main Theorem] \label{thrm:main}
Let $f:\mathbb{R}\times\mathbb{R}^n\to\mathbb{R}$, and consider the process $f(t) = f(t, X(t))$. Then
\begin{align}
\mathbb{E}[\psi(t)]\label{eq:pi0}
	&= \mathbb{E}\left[e^{-S^u(t)}\right], \\
\left< (u^* - u)f \right>(t) \label{eq:pi1}
	& =\lim_{r\to t}\left< \frac{\int_t^rf(s)dW(s)}{r - t}\right>(t).
\end{align}
\end{thrm}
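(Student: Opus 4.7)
The plan is to derive both identities from the Main Lemma and Corollary~\ref{coro:fk}. Equation~(\ref{eq:pi0}) is essentially a one-liner: taking unconditional expectations in Corollary~\ref{coro:fk} and using the tower property gives $\mathbb{E}[\psi(t)] = \mathbb{E}[\mathbb{E}[e^{-S^u(t)}\mid\mathcal{F}_t]] = \mathbb{E}[e^{-S^u(t)}]$.

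The real work lies in Eq.~(\ref{eq:pi1}), which I would prove by applying the Main Lemma on the short interval $[t,r]$. First, I would unwind the weighted average using $\alpha^u = e^{-S^u(t_0)}/\psi(t_0,x_0)$ together with the identity $e^{-S^u(t_0)} = \phi(r)\,e^{-S^u(r)}$. Since both $\phi(r)$ and $\int_t^r f(s)\,dW(s)$ are $\mathcal{F}_r$-measurable, conditioning the leftover factor $e^{-S^u(r)}$ on $\mathcal{F}_r$ and invoking Corollary~\ref{coro:fk} replaces it by $\psi(r)$, yielding
\begin{align*}
\psi(t_0,x_0)\!\left\langle\frac{\int_t^r f(s)\,dW(s)}{r-t}\right\rangle\!(t)
 = \frac{1}{r-t}\,\mathbb{E}\!\left[\phi(r)\psi(r)\!\int_t^r f(s)\,dW(s)\right].
\end{align*}

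Next, I would integrate Eq.~(\ref{eq:mainsde}) over $[t,r]$ to replace $\phi(r)\psi(r)$ by $\phi(t)\psi(t) + \int_t^r \phi(s)\psi(s)(u^*(s)-u(s))'\,dW(s)$ and split the expectation. The piece involving $\phi(t)\psi(t)$ is an $\mathcal{F}_t$-measurable factor times a stochastic integral beginning at $t$, so it vanishes by the martingale property after conditioning on $\mathcal{F}_t$. The cross-term between the two Itô integrals collapses under the vector-valued Itô isometry to $\mathbb{E}\!\int_t^r \phi(s)\psi(s)(u^*(s)-u(s))\,f(s)\,ds$. Dividing by $r-t$ and sending $r\downarrow t$, the normalized time integral tends to $\mathbb{E}[\phi(t)\psi(t)(u^*(t)-u(t))f(t)]$ by continuity of the integrand at $s=t$.

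To close the loop, I would identify this limit with $\psi(t_0,x_0)\langle(u^*-u)f\rangle(t)$. Since $(u^*(t)-u(t))f(t)$ is $\mathcal{F}_t$-measurable, the tower property reduces the weighted average to $\mathbb{E}[\mathbb{E}[\alpha^u\mid\mathcal{F}_t](u^*(t)-u(t))f(t)]$; using $\alpha^u = \phi(t)e^{-S^u(t)}/\psi(t_0,x_0)$, the $\mathcal{F}_t$-measurability of $\phi(t)$, and a final application of Corollary~\ref{coro:fk}, one obtains $\mathbb{E}[\alpha^u\mid\mathcal{F}_t] = \phi(t)\psi(t)/\psi(t_0,x_0)$, exactly matching the expression found above. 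The main obstacles are essentially bookkeeping: keeping the scalar-vector structure straight in the Itô isometry (the inner product $(u^*-u)'dW$ is scalar while $f\,dW$ is $m$-dimensional, so their covariation yields the vector $(u^*-u)f\,ds$), and justifying the passage to the limit $r\to t$, which reduces to a standard Lebesgue differentiation argument under the regularity tacitly assumed throughout the paper.
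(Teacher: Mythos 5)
Your proposal is correct and follows essentially the same route as the paper: both proofs rest on the Main Lemma (equivalently its differential form Eq.~(\ref{eq:mainsde})), the It\^o isometry applied to the cross-term over the short window $[t,r]$, the limit $r\to t$, and the tower-property identification $\mathbb{E}[e^{-S^u(t_0)}\mid\mathcal{F}_t]=\phi(t)\psi(t)$. The only difference is organizational --- you localize to $[t,r]$ by conditioning on $\mathcal{F}_r$ before invoking the isometry, whereas the paper multiplies the full Main Lemma at $t=t_0$ by $\int_t^r f\,dW$ and appeals to independent increments --- and your version spells out the bookkeeping (the vanishing $\phi(t)\psi(t)$ term, the scalar--vector covariation) a bit more explicitly.
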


\begin{proof}[Proof of (\ref{eq:pi0})]
Consider the Feynman-Kac Formula Eq.~(\ref{eq:fk}) and take the expectation with respect to $\mathcal{F}_{t_0}$. 
\end{proof}

\begin{proof}[Proof of (\ref{eq:pi1})]
Consider Lemma~\ref{lemm:main} with $t = t_0$, multiply by $\int_t^rf(s)dW(s)$, and take the expected value:
\begin{align*}
\mathbb{E} & \left[e^{-S^u(t_0)} \int_t^rf(s)dW(s)\right] \\
 & = \mathbb{E} \int_t^r \phi(s)\psi(s)[u^*(s)  -u(s)]f(s) ds.
\end{align*}
On the left-hand side the term $\psi(t_0)\int fdW$ has disappeared because $\psi(t_0)$ is not random and the stochastic integral has zero mean. On the right-hand side we have used independent increments and the It\^o Isometry. Dividing by $r - t$ and taking the limit $r \to t$ we obtain
\begin{align*}
\lim_{r\to t}\frac{1}{r - t}\mathbb{E} &\left[e^{-S^u(t_0)}\int_t^rf(s)dW(s)\right] \\
	& = \mathbb{E} \left[\phi(t) \psi(t) (u^*(t) - u(t))f(t) \right] \\
	& = \mathbb{E} \left[e^{-S^u(t_0)} (u^*(t) - u(t))f(t) \right],
\end{align*}
where in the last line we used that $\phi(t) = e^{-S^u(t_0) + S^u(t)}$ and $\psi(t) = \mathbb{E}[e^{-S^u(t)}|\mathcal{F}_{t}]$ combined with the Law of total expectation. Dividing both sides by $\mathbb{E}[ e^{-S^u(t_0)}]$ gives Eq.~(\ref{eq:pi1}).
\end{proof}


\section{A Parametrized Feedback Controller}
\label{sect:feedback}

In this section we illustrate how Theorem~\ref{thrm:main} can be used to construct a feedback controller. To this end we will assume that $u^*$ is of the following parametrized from:
\begin{align}
u^*(t, x) = A(t)h(t, x). \label{eq:param}
\end{align}
Here $h:\mathbb{R}\times\mathbb{R}^n\to \mathbb{R}^k$ will be referred to as the $k$ `basis' functions which are assumed to be known. The ``parameters'' $A(t)\in\mathbb{R}^{m\times k}$  are assumed to be unknown. Note that the open-loop controller can be obtained by a parametrization with one basis function $h = 1$. The following corollary states that it is possible to estimate the optimal parameters from the equations in the Main Theorem.

\begin{coro}
[Path Integral Feedback] 
\label{coro:feedback}
Let $f(t, x)\in\mathbb{R}^{l}$ be a function, and suppose that $u^*$ is of the form Eq.~(\ref{eq:param}), then 
\begin{align}
A(t)\left< h f' \right>\!(t) = \left< u f' \right>\!(t) + 
	\lim_{r\to t}\left< \frac{\int_t^rf'(s)dW(s)}{r - t}\right>.\label{eq:up}
\end{align}
\end{coro}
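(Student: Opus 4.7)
The plan is to derive Eq.~(\ref{eq:up}) as a direct algebraic consequence of Eq.~(\ref{eq:pi1}) from the Main Theorem, with the parametrization $u^*(t,x) = A(t) h(t,x)$ used to expose the unknown matrix $A(t)$.

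First I would promote Eq.~(\ref{eq:pi1}) from the scalar-valued $f$ assumed in Theorem~\ref{thrm:main} to the vector-valued $f \in \mathbb{R}^l$ needed here. Since the weighted average and the It\^o integral are both linear, I can apply Theorem~\ref{thrm:main} to each scalar component $f_j$ separately and assemble the resulting $m$-vector identities as the columns of a matrix, yielding the $m \times l$ identity
\[
\langle (u^* - u)\, f' \rangle(t) = \lim_{r \to t} \left\langle \frac{\int_t^r f'(s)\, dW(s)}{r - t} \right\rangle(t),
\]
with the stochastic integral read componentwise (entry $(i,j)$ being $\int_t^r f_j(s)\, dW_i(s)$).

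Next I would substitute the ansatz $u^*(t) = A(t) h(t)$ into the left-hand weighted average and use linearity to split $\langle (u^* - u) f' \rangle(t) = \langle u^* f' \rangle(t) - \langle u f' \rangle(t)$. Because $A(t)$ is deterministic (a function of time alone), it factors outside the weighted average, giving $\langle u^* f' \rangle(t) = A(t) \langle h f' \rangle(t)$. A trivial rearrangement then yields Eq.~(\ref{eq:up}).

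I do not expect any genuine obstacle: this corollary is essentially bookkeeping once the Main Theorem is in hand. The only subtleties are tracking matrix dimensions so that both sides of Eq.~(\ref{eq:up}) are $m \times l$ (with $A \in \mathbb{R}^{m \times k}$, $h \in \mathbb{R}^k$, $f \in \mathbb{R}^l$), and observing that $A(t)$ can be taken outside the weighted average by virtue of being nonrandom.
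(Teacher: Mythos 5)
Your proposal is correct and follows the same route as the paper, which likewise derives Eq.~(\ref{eq:up}) directly from Eq.~(\ref{eq:pi1}) by substituting the parametrization $u^* = A(t)h$; the paper merely omits the bookkeeping you spell out (componentwise extension to vector-valued $f$, linearity of $\left<\cdot\right>$, and pulling the nonrandom $A(t)$ outside the average).
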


\begin{proof}
This follows directly from Eq.~(\ref{eq:pi1}) of the Main Theorem when the parametrized from of $u^*$ is used. 
\end{proof}

Assuming that both the right-hand side and the cross correlations $\left<h f'\right>\!(t)$ can be obtained by sampling methods, Eq.~(\ref{eq:up}) gives for each time $t$ a set of $m\times k$ linear equations in the $k\times m$ unknown parameters $A(t)$. These equations can be solved uniquely if the $k\times l$ matrix $\left<hf'\right>$ is of rank $k$. Although we have to do these computations for each time $t$ separately, only one set of paths is needed to get the sampling estimates for all times. 

In general it will be impossible to check whether the optimal control is of the parametrized form. However, it seems plausible that if the parametrization can represent $u^*$ quite well, it will be possible to estimate a good control function using Corollary~\ref{coro:feedback}. In the next section we perform a numerical experiment to support this statement. 

Note, that we can use any importance control $u$ to estimate the optimal control $u^*$. In principle, we could use $u = 0$ and sample long enough to compute the $u^*$ sufficiently accurately. However, we find it more efficient to use an iterative method where we use the optimal control estimate $u_l$ that was computed at iteration $l$ as an importance control for the computation of the optimal control $u_{l+1}$. According to Corollary~\ref{coro:fes} we know that improved controls have a higher fraction of effective samples and thus will make more efficient use of the sampling data. In particular, if $u$ and $u^*$ are parametrized with the same basis functions and time dependent coefficients $A(t)$ and $A^*(t)$, respectively, this results in an iterative update scheme for these coefficients. We refer to this method as iterative importance sampling. 

We conclude that parametrized control functions can be obtained directly from path integral estimates, where the parameters can be computed using a single set of paths. Critically, these parametrized controls can be state dependent functions. As a result, it is possible to construct (closed-loop) feedback controllers, which are more widely applicable than open-loop controllers. 


\section{Example}
\label{sect:example}

We consider the following control problem, of which we know the analytical solution.

\begin{exmp}[Geometric Brownian Motion] \label{exmp:gbm}
For $t_0 \leq t \leq t_1$, the one-dimensional problem 
\begin{align*}
dX^u(t)
	& =	X^u(t)\left(\frac{dt}2  + u(t, X^u(t))dt + dW(t)\right),	\\
S^u(t)
	& =	\frac{Q}2 \log(X^u(t_1))^2 +
		\frac12\int_{t}^{t_1} u(s, X^u(s))^2ds \\
	& \phantom{=\ } + \int_{t}^{t_1} u(s, X^u(s))'dW(s),
\end{align*}
has solution
\begin{align*}
&u^*(t, x)
	= \frac{-Q\log(x)}{Q(t_1 - t) + 1}.
\end{align*}
For the experiments we will take $x_0 = 1/2$, $t_0 = 0$,  $t_1 = 1$, and $Q = 10$.
\end{exmp}

In a first experiment we visualize Theorem~\ref{thrm:var}. To this end we consider a range of sub-optimal importance controls $u^{\epsilon}(t, x) = u^*(t, x) + \sqrt{\epsilon}$. Each $u^{\epsilon}$ yields a path weight $\alpha^{\epsilon} := \alpha^{u^{\epsilon}}$. Because $\left<u^*- u\right>'\left<u^*- u\right> = \epsilon$, Theorem~\ref{thrm:var} implies that 
$\epsilon \leq\operatorname{Var}(\alpha^{\epsilon})\leq \frac\epsilon{1 - \epsilon}$. The results are reported in Figure~\ref{fig:bound}. 

\begin{figure}
\includegraphics{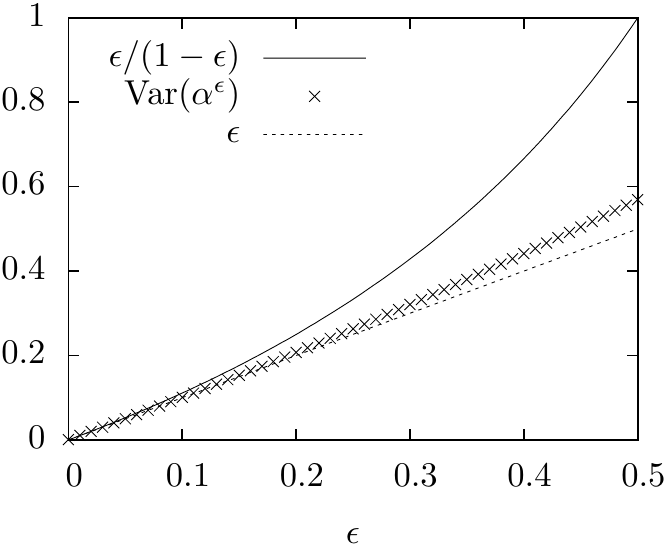}
\caption{
	Estimate of $\operatorname{Var}(\alpha^{\epsilon})$, where $\alpha^{\epsilon} := e^{-S^{u^{\epsilon}}(t_0)}/\psi(t_0, x_0)$ with upper and lower bounds from Theorem~\ref{thrm:var} with respect to the control problem in Example~\ref{exmp:gbm}. Here we considered a range of sub-optimal importance controls $u^{\epsilon}(t, x) = u^*(t, x) + \sqrt{\epsilon}$. The estimate of the variance is based on $10^4$ paths that were generated with $dt = 0.001$. 
}
\label{fig:bound}
\end{figure}

In a second experiment we construct feedback control functions based on various parametrizations. It is clear that a correct parametrization of the problem in Example~\ref{exmp:gbm} can be obtained with just one basis function: $\log(x)$. In the experiment we also consider three parametrizations that cannot describe $u^*$: a constant, an affine, and a quadratic function of the state. The three controllers that we obtain in this way are denoted by $u^{(0)}$, $u^{(1)}$, and $u^{(2)}$, e.g., $u^{(2)}(t, x) = a(t) + b(t)x + c(t)x^2$. 

\begin{table}
\caption{\label{tbl:gbm}
	Performance estimates of various controllers based on $10^4$ sample paths. Although for numerical consistency we used $10^4$ sample paths to compute the parameters, only roughly $10^2$ samples are required to obtain well-performing controllers.
}
\begin{ruledtabular}
\begin{tabular}{l|cccccc}
				& $u = 0$	& $u^{(0)}$	&	$u^{(1)}$ &  $u^{(2)}$ &$a(t)\log(x)$ & $u^*$
\\ \hline 
$\mathbb{E}[S^u(t_0)]$					& 7.526	& 5.139 & 1.507 & 1.461	& 1.422	& 1.420 \\
$\operatorname{Var}(\alpha^u)$	& 1.981	& 1.376	& 0.143	& 0.0506& 0.0085& 0.0071  \\
$\lambda^u$(\%)							& 34.3	& 42.08	& 87.5	& 95.2	& 99.1	& 99.3  
\end{tabular}
\end{ruledtabular}
\end{table}

We have used iterative importance sampling with $f = h$ as described in the previous section to estimate the parameters. The performance of the resulting control functions is given in Table~\ref{tbl:gbm}. The row $\mathbb{E}[S^u(t_0)]$ gives the expected cost, which we want to minimize. The row $\operatorname{Var}(\alpha^u)$ gives the variance of the path weight, which is directly related to the FES. Clearly the open-loop controller $u^{(0)}(t, x) = a(t)$ improves upon the zero controller $u(t, x) = 0$. The control further improves when the affine and quadratic basis functions are subsequently considered. The best result is obtained, unsurprisingly, with the logarithmic parametrization. 

In Figure~\ref{fig:control} we plot the state dependence of the feedback controllers at the intermediate time $t = 1/2$. Although the parametrized functions yield a control for all $x$, we are mainly interested in regions of the state space that are likely to be visited by the process $X$. This is visualized by a histogram of $10^4$ particles that are drawn from $X^{u^*}(1/2)$. We observe that the optimal logarithmic shape is fitted, and that more complex parametrizations yield a better fit. 

\begin{figure}
\includegraphics{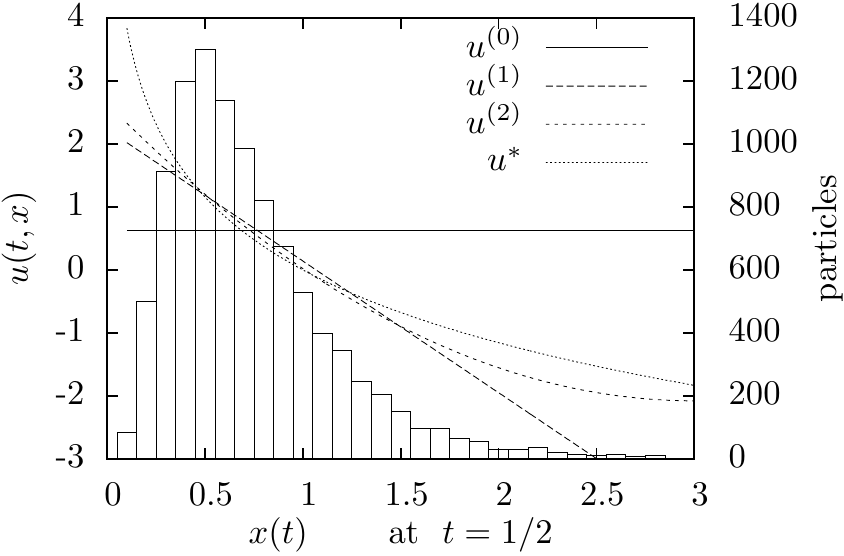}
\caption{
	The approximate controls calculated with $10^4$ sample paths in two importance sampling iterations using a time discretization of $dt = 0.001$ for numerical integration. The histogram was created with $10^4$ draws from $X^{u^*}(t)$ at $t = 1/2$. 
}
\label{fig:control}
\end{figure}


\section{Discussion}

Most current feedback controllers that are used to stabilize systems are linear feedback controllers such as PID controllers. These are heuristic approaches that are optimal only if one assumes that the system dynamics is linear and the cost is quadratic. In this paper we have shown how to compute optimal feedback controllers for a class of nonlinear stochastic control problems. The optimality requires the use of the appropriate basis functions.

It should be noted that the optimal feedback is not necessarily a stabilizing term. Depending on the task it might be optimal to destabilize by amplifying the noise, for example, to create momentum efficiently.

Future work includes the development of methods for practical scenarios, based on the path integral feedback Eq.~(\ref{eq:up}). An important aspect will be the selection of basis functions. A recent related work \cite{horowitz} discusses basis functions to obtain a solution of the linearized HJB Eq.~(\ref{eq:linhjb}). 

\begin{acknowledgments}
This work was funded by D-CIS Lab / Thales Research \& Technology NL, and supported by the European Community Seventh Framework Programme (FP7/2007-2013) under grant agreement 270327 (CompLACS).
\end{acknowledgments}

\bibliography{pi_feedback.bib}

\end{document}